\newtheorem{precor}{{\bf Corollary}}
\newenvironment{cor}{\begin{precor}{\hspace{-0.5
               em}{\bf.\ }}}{\end{precor}}
\newtheorem{precon}{{\bf Conjecture}}
\newtheorem{predefin}{{\bf Definition}}
\newenvironment{defin}[1]{\begin{predefin}{\hspace{-0.5
                   em}{\bf.\ }}{\rm
#1}\hfill{$\spadesuit$}}{\end{predefin}}
\newtheorem{preexm}{{\bf Example}}
\newtheorem{preappl}{{\bf Application}}
\newtheorem{prelem}{{\bf Lemma}}
\newenvironment{lem}{\begin{prelem}{\hspace{-0.5
               em}{\bf.\ }}}{\end{prelem}}
\newtheorem{preproof}{{\bf Proof.\ }}
\newenvironment{proof}[1]{\begin{preproof}{\rm
               #1}\hfill{$\blacksquare$}}{\end{preproof}}
\newtheorem{presproof}{{\bf Sketch of Proof.\ }}
\newtheorem{prethm}{{\bf Theorem}}
\newenvironment{thm}{\begin{prethm}{\hspace{-0.5
               em}{\bf.\ }}}{\end{prethm}}
\newtheorem{preconj}{{\bf Conjecture}}
\newtheorem{prealphthm}{{\bf Theorem}}
\newenvironment{alphthm}{\begin{prealphthm}{\hspace{-0.5
               em}{\bf.\ }}}{\end{prealphthm}}
\newtheorem{prepro}{{\bf Proposition}}
\newtheorem{preprb}{{\bf Problem}}
\newtheorem{prerem}{{\bf Remark}}
\newenvironment{rem}{\begin{prerem}{\hspace{-0.5
               em}{\bf.\ }}}{\end{prerem}}
\def\conct[#1,#2]{\mbox {${#1} \leftrightarrow {#2}$}}
\def\dconct[#1,#2]{\mbox {${#1} \rightarrow {#2}$}}
\def\deg[#1,#2]{\mbox {$d_{_{#1}}(#2)$}}
\def\mindeg[#1]{\mbox {$\delta_{_{#1}}$}}
\def\maxdeg[#1]{\mbox {$\Delta_{_{#1}}$}}
\def\outdeg[#1,#2]{\mbox {$d_{_{#1}}^{^+}(#2)$}}
\def\minoutdeg[#1]{\mbox {$\delta_{_{#1}}^{^+}$}}
\def\maxoutdeg[#1]{\mbox {$\Delta_{_{#1}}^{^+}$}}
\def\indeg[#1,#2]{\mbox {$d_{_{#1}}^{^-}(#2)$}}
\def\minindeg[#1]{\mbox {$\delta_{_{#1}}^{^-}$}}
\def\maxindeg[#1]{\mbox {$\Delta_{_{#1}}^{^-}$}}
\def\isdef{\mbox {$\ \stackrel{\rm def}{=} \ $}}
\def\dre[#1,#2,#3]{\mbox {${\cal E}_{_{#3}}(#1,#2)$}}
\def\pdre[#1,#2,#3]{\mbox {${\cal P}_{_{#3}}(#1,#2)$}}
\def\var[#1,#2]{\mbox {${\rm Var}_{_{#1}}(#2)$}}
\def\ls[#1]{\mbox {$\xi^{^{#1}}$}}
\def\hom[#1,#2]{\mbox {${\rm Hom}({#1},{#2})$}}
\def\onvhom[#1,#2]{\mbox {${\rm Hom^{v}}(#1,#2)$}}
\def\onehom[#1,#2]{\mbox {${\rm Hom^{e}}(#1,#2)$}}
\def\core[#1]{\mbox {$#1^{^{\bullet}}$}}
\def\cay[#1,#2]{\mbox {${\rm Cay}({#1},{#2})$}}
\def\cays[#1,#2]{\mbox {${\rm Cay_{s}}({#1},{#2})$}}
\def\dirc[#1]{\mbox {$\stackrel{\rightarrow}{C}_{_{#1}}$}}
\def\cycl[#1]{\mbox {${\bf Z}_{_{#1}}$}}
\def\sdg[#1]{\mbox {$\stackrel{\leftrightarrow}{#1}$}}
\begin{document}
\begin{center}
{\Large \bf On Colorings of graph fractional powers}\\
\vspace*{0.5cm}
{\bf Moharram N. Iradmusa}\\
{\it Department of Mathematical Sciences}\\
{\it Shahid Beheshti University, G.C.}\\
{\it P.O. Box {\rm 19839-63113}, Tehran, Iran}\\
{\it m\_iradmusa@sbu.ac.ir}\\
\vspace*{0.5cm}
\emph{Dedicated to Professor Cheryl Praeger}\\
\emph{on her 60th birthday}\\
\end{center}
\begin{abstract}
\noindent For any $k\in \mathbb{N}$, the $k-$subdivision of graph $G$ is a simple
graph $G^{\frac{1}{k}}$, which is constructed by replacing each edge
of $G$ with a path of length $k$. In this paper we introduce the $m$th power
of the $n-$subdivision of $G$, as a fractional power of $G$, denoted by $G^{\frac{m}{n}}$. In this regard, we investigate chromatic number and clique number of fractional power of graphs. Also, we conjecture that $\chi(G^{\frac{m}{n}})=\omega(G^\frac{m}{n})$ provided that $G$ is a connected graph with $\Delta(G)\geq3$ and $\frac{m}{n}<1$. It is also shown that this conjecture is true in some special cases.

\begin{itemize}
\item[]{{\footnotesize {\bf Key words:}\ Chromatic number, Subdivision of a graph, Power of a graph.}}
\item[]{ {\footnotesize {\bf Subject classification: 05C} .}}
\end{itemize}
\end{abstract}
\section{Introduction and Preliminaries}
In this paper we only consider simple graphs which are finite,
undirected, with no loops or multiple edges. We mention some of the
definitions which are referred to throughout the paper. As usual, we
denote by $\Delta(G)$ or simply by $\Delta$, the maximum degrees of
the vertices of graph $G$, and by $\omega(G)$, the maximum size of a
clique of $G$, where a clique of $G$ is a set of mutually adjacent
vertices. In addition $N_G(v)$, called the neighborhood of $v$ in
$G$, denotes the set of vertices of $G$ which are adjacent to the
vertex $v$ of $G$. For another necessary definitions and notations
we refer the reader to textbook [4].\\
Let $G$ be a graph and $k$ be a positive integer. The
\emph{$k-$power of $G$}, denoted by $G^k$, is defined on the vertex
set $V(G)$, by connecting any two distinct vertices $x$ and $y$ with
distance at most $k$ [1,8]. In other words, $E(G^k)=\{xy:1\leq
d_G(x,y)\leq k\}$. Also \emph{$k-$subdivision of $G$}, denoted by
$G^\frac{1}{k}$, is constructed by replacing each edge $ij$ of $G$ with a
path of length $k$, say $P_{ij}$. These $k-$pathes are called \emph{hyperedges}
and any new vertex is called \emph{internal vertex} or brifely \emph{i-vertex} and is denoted by
$i(l)j$, if it belongs to the hyperedge $P_{ij}$ and has distance $l$
from the vertex $i$, where $l\in\{1,2,\ldots,k-1\}$. Note that
$i(l)j=j(k-l)i$, $i(0)j\isdef i$ and $i(k)j\isdef j$. Also any vertex $i=i(0)j$ of $G^\frac{1}{k}$ is called \emph{terminal vertex} or brifely \emph{t-vertex}.\\
Note that for $k=1$, we have $G^\frac{1}{1}=G^1=G$, and if the graph
$G$ has $v$ vertices and $e$ edges, then the graph $G^\frac{1}{k}$
has $v+(k-1)e$ vertices and
$ke$ edges.\\
Now we can define the fractional power of a graph as follows.
\begin{defin}
{Let $G$ be a graph and $m,n\in \mathbb{N}$. The graph
$G^{\frac{m}{n}}$ is defined by the $m-$power of the $n-$subdivision
of $G$. In other words
$G^{\frac{m}{n}}\isdef (G^{\frac{1}{n}})^m$. }
\end{defin}
Note that the graphs $(G^{\frac{1}{n}})^m$ and
$(G^{m})^{\frac{1}{n}}$ are different graphs
and in this paper, we only consider the graphs $(G^{\frac{1}{n}})^m$ for all positive integers $m$ and $n$. Furthermore, we assume that $V(G^{\frac{m}{n}})=V(G^{\frac{1}{n}})$.\\
As you know, for any graph $G$, we have
$\chi(G)\leq\chi(G^2)\leq\chi(G^3)\leq\ldots\leq\chi(G^d)=|V(G)|$,
where $d$ is the diameter of $G$. In other words, the
chromatic number of a graph $G$ increases when we replace $G$ by
its powers. The problem of coloring of graph powers, specially planar
graph powers, is studied in the literature (see for example [1, 2, $5-11$]).\\
On the other hand, it is easy to verify that by replacing any graph by
its subdivisions, the chromatic number of graph decreases. In other
words, $\chi(G)\geq\chi(G^\frac{1}{k})$ for any $k\in \mathbb{N}$. Now the following
question arises naturally.\\
\textbf{Question.} \emph{What happened for the chromatic number,
when we consider the fractional power of a graph, specially less
than (or more than) one power?}\\
In this paper we answer to this
question for the less than one power of a graph and find the
chromatic number of $G^{\frac{m}{n}}$ for
rational number $\frac{m}{n}<1$.\\
Another motivation for us to define the fractional power of a graph is the \emph{Total Coloring Conjecture}. Vizing (1964) and, independently, Behzad (1965) conjectured that the total chromatic number of a simple graph G never exceeds $\Delta+2$ (and thus $\Delta+1\leq\chi''(G)\leq\Delta+2$) [3,12]. By virtue of Definition 1, the total chromatic number of a simple graph $G$ is equal to $\chi(G^{\frac{2}{2}})$ and $\omega(G^{\frac{2}{2}})=\Delta+1$ when $\Delta\geq2$. Thus, the Total Coloring Conjecture can be reformulated as follows.\vspace{.2cm}\\
\emph{\textbf{Total Coloring Conjecture.}} \emph{Let $G$ be a simple graph and $\Delta(G)\geq2$. Then $\chi(G^{\frac{2}{2}})\leq\omega(G^{\frac{2}{2}})+1$.}\vspace{.2cm}\\
In the next section, at first, we calculate the clique
number of $G^{\frac{m}{n}}$ when $\frac{m}{n}<1$. Next, the chromatic number of $G^{\frac{m}{n}}$ is calculated when $\Delta(G)=2$. Later, we show that $\chi(G^{\frac{2}{n}})=\omega(G^{\frac{2}{n}})$ for any positive integer
$n\geq 3$ when $\Delta(G)\geq3$. Finally we show that $\chi(G^{\frac{m}{m+1}})=\omega(G^{\frac{m}{m+1}})$ when  $G$ is a connected graph with $\Delta(G)\geq3$ and $m\in \mathbb{N}$, which leads us to claim the following conjecture.\\
\emph{\noindent\textbf{Conjecture A.} Let $G$ be a connected graph with $\Delta(G)\geq3$, $n,m\in \mathbb{N}$ and $1<m<n$. Then
\[\chi(G^{\frac{m}{n}})=\omega(G^\frac{m}{n}).\]}
\section{Main Results}
Here, we find the clique number of $G^{\frac{m}{n}}$ when
$\frac{m}{n}<1$.
\begin{thm}
{Let $G$ be a graph, $n,m\in \mathbb{N}$ and $m<n$. Then
\[\omega(G^{\frac{m}{n}})=\left\{\begin{array}{ll}m+1&\Delta(G)=1\\\frac{m}{2}
\Delta(G)+1&\Delta(G)\geq2,m\equiv
0\medspace(mod\thinspace2)\\\frac{m-1}{2}\Delta(G)+2&\Delta(G)\geq2,m\equiv
1\medspace(mod\thinspace2).\end{array}\right.\] }
\end{thm}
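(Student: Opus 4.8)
The plan is to prove matching lower and upper bounds. Throughout I write $H=G^{\frac{1}{n}}$, so that $G^{\frac{m}{n}}=H^{m}$ and two vertices are adjacent in $G^{\frac{m}{n}}$ exactly when their distance in $H$ is at most $m$. The single most useful observation is that $H$ stretches every edge of $G$ to length $n$, so for two distinct t-vertices $i,j$ we have $d_{H}(i,j)=n\,d_{G}(i,j)\ge n>m$; hence \emph{a clique of $G^{\frac{m}{n}}$ contains at most one t-vertex}. Next I would establish a localization principle. If a clique contains a t-vertex $c$, then any i-vertex $x$ lying on a hyperedge none of whose ends is $c$ satisfies $d_{H}(x,c)\ge n+1>m$, because any $x$--$c$ walk must leave that hyperedge through one of its t-vertex ends (cost $\ge 1$) and then travel at least $n$ to reach $c$; so all i-vertices of the clique lie on hyperedges incident to $c$, at depth at most $m$ from $c$. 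A clique with \emph{no} t-vertex is reduced to this situation: two i-vertices on hyperedges with no common endpoint are at $H$-distance at least $1+n+1>m$, so the hyperedges carrying clique i-vertices pairwise share an endpoint; three such hyperedges forming a triangle $P_{ab},P_{bc},P_{ca}$ are impossible since summing the three pairwise distances forces $3n\le 3m$, contradicting $m<n$. As pairwise-intersecting $2$-element sets form either a star or a triangle, all these hyperedges share a common t-vertex $c$, which we may then adjoin to the clique without decreasing its size. The case $\Delta(G)=1$ is isolated first and handled directly: every component of $H$ is a path on $n+1$ vertices, whose $m$-th power has clique number $\min(m+1,n+1)=m+1$.

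With localization in hand, every clique (of a graph with $\Delta\ge 2$) lies inside the star formed by $c$ and the hyperedges $P_{c u_{t}}$ incident to it, of which there are at most $\Delta$ since a simple graph has one hyperedge per neighbour. The crucial distance computation, again driven by $m<n$, is that shortcuts are useless: for i-vertices $x,y$ at depths $a,b$ on two distinct legs $P_{c u},P_{c u'}$, any path avoiding $c$ costs at least $(n-a)+n+(n-b)=3n-a-b>n>m$, so $d_{H}(x,y)=a+b$ and $x,y$ are adjacent if and only if $a+b\le m$; two i-vertices on the same leg at depths $a,b\le m$ are at distance $|a-b|\le m$, hence always adjacent. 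Consequently a clique is determined by the set of depths chosen on each leg, and taking all depths $1,\dots,M_{t}$ on leg $t$ is optimal, the only binding constraints being $M_{s}+M_{t}\le m$ for distinct used legs.

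The problem thus becomes the integer program: maximize $1+\sum_{t}M_{t}$ over at most $\Delta$ nonnegative integers with $M_{s}+M_{t}\le m$ for all $s\ne t$. If every $M_{t}\le\lfloor m/2\rfloor$ the constraints hold automatically, giving $1+\Delta\lfloor m/2\rfloor$; allowing one $M_{t}$ to exceed $m/2$ forces all others below $m/2$, and a short monotonicity check (the objective's slope in the large coordinate is $2-\Delta\le 0$) shows this helps only when $m$ is odd, where raising one leg from $\frac{m-1}{2}$ to $\frac{m+1}{2}$ gains exactly $1$. This yields clique number $\frac{m}{2}\Delta+1$ when $m$ is even and $\frac{m-1}{2}\Delta+2$ when $m$ is odd, matching the statement; the corresponding extremal clique (centre $c$, full depths $1,\dots,\lfloor m/2\rfloor$ on each leg, plus one extra deeper vertex on a single distinguished leg when $m$ is odd) is the explicit construction furnishing the lower bound, whose pairwise adjacencies I would verify against the rules $a+b\le m$ and $|a-b|\le m$ above.

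I expect the main obstacle to be the localization step for cliques with no t-vertex, specifically the rigorous exclusion of the triangle configuration and the passage from ``pairwise intersecting hyperedges'' to ``common t-vertex,'' since this is where the hypothesis $m<n$ is indispensable and where a naive distance estimate could mistakenly admit shortcuts; establishing that alternative routes always cost at least $3n-a-b$, hence exceed $m$, is the technical heart of the argument. The remaining integer optimization is routine once the even/odd split is isolated.
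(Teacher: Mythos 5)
Your proof is correct and follows the same skeleton as the paper's --- at most one t-vertex per clique, localization of all clique vertices onto the hyperedges incident to a single t-vertex $c$, and then a counting argument over the legs --- but your execution is more systematic at the two places where the paper is terse. First, for cliques with no t-vertex the paper simply asserts that ``all vertices of $C$ belong to a hyperedge''; you actually prove the reduction via the star/triangle dichotomy for pairwise-intersecting $2$-sets, with the nice observation that the three pairwise distances around a triangle of hyperedges sum to exactly $3n$, forcing $3n\le 3m$, which is the honest justification the paper skips. Second, where the paper bounds the clique with a t-vertex by singling out the farthest vertex $y$ at distance $d$ and computing $(d(x)-1)(m-d)+d+1$ (and then waves at the odd case with ``similar to case 2''), you package the same count as an integer program with constraints $M_s+M_t\le m$, which handles both parities uniformly and makes the extremal configurations transparent. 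One sentence of yours is overstated: when the clique with no t-vertex lies on a \emph{single} hyperedge, the common endpoint $c$ cannot always be adjoined (e.g.\ depths $4,\dots,7$ with $m=3$, $n=10$ are within distance $m$ of each other but not of either endpoint), so the reduction ``adjoin $c$ without decreasing size'' only works when at least two legs carry clique vertices; in the one-hyperedge case you instead need the direct bound $|C|\le m+1\le \frac{m}{2}\Delta+1$ (resp.\ $\le\frac{m-1}{2}\Delta+2$) for $\Delta\ge2$, which follows immediately from your same-leg adjacency rule, so this is a one-line patch rather than a structural gap --- and it is exactly the bound the paper itself uses at that point.
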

\begin{proof}
{We consider three cases:\\
\emph{Case 1.} $\Delta(G)=1$\\
In this case, each connected component of $G$ is $K_2$ or $K_1$. Therefore,
$\omega(G^{\frac{m}{n}})=\omega(K_2^{\frac{m}{n}})=\omega(P_{n+1}^m)$.
It is clear that $\omega(P_{n+1}^m)=m+1$.\\
\emph{Case 2.} $\Delta(G)\geq2$, $m\equiv 0\medspace(mod\thinspace2)$\\
Choose a t-vertex $x$ of $G^{\frac{m}{n}}$ such that
$d_G(x)=\Delta(G)$. Consider the set
$S=\{y:d_{G^{\frac{m}{n}}}(y,x)\leq \frac{m}{2}\}$. Obviously, this
set is a clique with $\frac{m}{2} \Delta(G)+1$ vertices. Now, we show that
any clique of $G^{\frac{m}{n}}$ has at most $\frac{m}{2}
\Delta(G)+1$ vertices. Let $C$ be a maximal clique of $G^{\frac{m}{n}}$. The distance of any two t-vertices in $G^{\frac{m}{n}}$ is at least two. Therefore, $C$ has at most one t-vertex. First, suppose that $C$ has not any t-vertex.
Thus, all vertices of $C$ belong to a hyperedge and hence $|C|\leq m+1\leq
\frac{m}{2} \Delta(G)+1$. Now, suppose that $C$ has a t-vertex $x$. Let $y$ has the greatest distance from $x$ among the other vertices of $C$. If $d_{G^{\frac{m}{n}}}(y,x)=d>\frac{m}{2}$, then
$C$ has at most $m-d$ vertices in common with any hyperedge that is
adjacent with $x$ and doesn't contain $y$. Therefore, \[|C|=(d(x)-1)(m-d)+d+1\leq(\Delta(G)-2)(m-d)+m+1\]
\[\leq(\Delta(G)-2)\frac{m}{2}+m+1=\Delta(G)\frac{m}{2}+1.\]
In other case, if $d\leq \frac{m}{2}$, then $C$ has at most $\frac{m}{2}$
vertices in common with any hyperedge that is
adjacent to $x$. So $|C|\leq \frac{m}{2}\Delta(G)+1$.\\
\emph{Case 3.} $\Delta(G)\geq2$, $m\equiv 1\medspace(mod\thinspace2)$\\
Choose a t-vertex $x$ of $G^{\frac{m}{n}}$ such that
$d_G(x)=\Delta(G)$. Consider the sets
$S_1=\{y:d_{G^{\frac{m}{n}}}(y,x)\leq \frac{m-1}{2}\}$ and $S_2=\{z:d_{G^{\frac{m}{n}}}(z,x)=\frac{m+1}{2}\}$. Obviously each
element of $S_2$ along with the vertices of $S_1$, form a clique of size
$\frac{m-1}{2}\Delta(G)+2$. Now, similar to case 2, one can prove that
any clique of $G^{\frac{m}{n}}$ has at most $\frac{m-1}{2}
\Delta(G)+2$ vertices.}
\end{proof}
F. Kramer and H. Kramer gave in [9] the following characterization
of the graphs for which $\chi(G^m)=m+1$.
\begin{alphthm}
{\emph{[9]} Let $G=(V,E)$ be a simple and connected graph and $m\in \mathbb{N}$. We have $\chi(G^m)=m+1$ if and only if the
graph $G$ is satisfying one of the following conditions:\\
$(a)$ $|V|=m+1$,\\
$(b)$ $G$ is a path of length greater than m,\\
$(c)$ $G$ is a cycle of length a multiple of $m+1$.}
\end{alphthm}
To find the chromatic number of $G^{\frac{m}{n}}$ in the case
$\frac{m}{n}<1$, we only consider the connected graphs in two cases
$\Delta(G)=2$ and $\Delta(G)>2$. Theorem A can be considered as a result of
Theorems 1 and 2.
\begin{thm}
{Let $m,k\in \mathbb{N}$ and $k\geq3$. Then\\
$(a)$ $\chi(C_k^{m})=\left\{\begin{array}{ll}k&m\geq\lfloor\frac{k}{2}\rfloor\\
    \lceil\frac{k}{\lfloor\frac{k}{m+1}\rfloor}\rceil&m<\lfloor\frac{k}{2}\rfloor,\end{array}\right.$\\
$(b)$  $\chi(P_k^{m})=min\{m+1,k\}$.
}
\end{thm}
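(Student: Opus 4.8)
The plan is to handle the two parts separately; in each the lower bound comes from a large clique or from a counting bound via the independence number, while the upper bound comes from an explicit periodic coloring.

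For $(b)$, I would use that $P_k^m$ is the graph on $\{0,1,\dots,k-1\}$ with $i\sim j$ exactly when $0<|i-j|\le m$. Any $\min\{m+1,k\}$ consecutive vertices are pairwise within distance $m$, hence form a clique, which gives $\chi(P_k^m)\ge\min\{m+1,k\}$. For the matching upper bound I would color vertex $i$ by $i\bmod(m+1)$, using only $\min\{m+1,k\}$ residues; two equally colored vertices differ by a nonzero multiple of $m+1$, hence by at least $m+1>m$, so they are non-adjacent. This finishes $(b)$ with no case analysis.

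For $(a)$ the regime $m\ge\lfloor k/2\rfloor$ is immediate, since the diameter of $C_k$ is $\lfloor k/2\rfloor$ and therefore $C_k^m=K_k$, forcing $\chi=k$. The content lies in the regime $m<\lfloor k/2\rfloor$. Writing $q=\lfloor k/(m+1)\rfloor$ and $c=\lceil k/q\rceil$, I would first determine the independence number $\alpha(C_k^m)$: an independent set of $C_k^m$ is a set of cycle vertices whose cyclic gaps are all $\ge m+1$, so a $t$-element independent set forces $t(m+1)\le k$, giving $\alpha(C_k^m)=q$ (attained at $0,m+1,\dots,(q-1)(m+1)$). Since each color class is independent, $\chi(C_k^m)\ge k/\alpha(C_k^m)=k/q$, and hence $\chi(C_k^m)\ge\lceil k/q\rceil=c$.

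It remains to color $C_k^m$ with $c$ colors. Here I would record the inequality $c\ge m+1$ (from $q\le k/(m+1)$, which gives $k/q\ge m+1$) and partition the cyclically ordered vertices into $q$ consecutive arcs, $q-r$ of length $c$ and $r$ of length $c-1$, where $r=cq-k\in\{0,\dots,q-1\}$; this is feasible because $(q-r)c+r(c-1)=k$. Coloring each arc $1,2,3,\dots$ from its starting vertex, two vertices of a common color sit at the same offset inside their arcs, so their cyclic distance is a sum of intervening arc-lengths, the smallest such value (adjacent arcs) being a single arc-length. The one point needing care, and the only place I expect friction, is to check that even a short arc has length $\ge m+1$: short arcs occur only when $r>0$, i.e. when $k/q>m+1$, which forces $c=\lceil k/q\rceil\ge m+2$ and so $c-1\ge m+1$; when $r=0$ all arcs have length $c\ge m+1$. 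In either situation every color class has all cyclic gaps $\ge m+1$, so the coloring is proper and $\chi(C_k^m)\le c$. Together with the lower bound this yields $\chi(C_k^m)=\lceil k/\lfloor k/(m+1)\rfloor\rceil$.
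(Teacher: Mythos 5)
Your proposal is correct, and for part $(a)$ it is essentially the paper's argument: the same lower bound (color classes of size at most $\lfloor k/(m+1)\rfloor$, hence $\chi\geq\lceil k/\lfloor k/(m+1)\rfloor\rceil$) and the same upper-bound construction, a partition of the cycle into $q$ consecutive arcs of the two adjacent lengths $c$ and $c-1$, each colored consecutively; your parametrization ($q-r$ arcs of length $c$ and $r$ of length $c-1$ with $r=cq-k$) is just a cleaner bookkeeping that subsumes the paper's split $k=(m+1)q+r$, $r=qq_1+r_1$ and its two subcases $r_1=0$ and $r_1\geq1$, and your verification that short arcs have length $\geq m+1$ (since $r>0$ forces $c\geq m+2$) is exactly the point the paper leaves implicit. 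The one genuine divergence is part $(b)$: the paper embeds $P_k^m$ into a long cycle $C_l^m$ with $(m+1)\mid l$ and restricts a proper coloring obtained from part $(a)$, whereas you color directly by residues modulo $m+1$; your route is more elementary and self-contained, while the paper's buys part $(b)$ as a corollary of the cycle case with no new coloring to check. Two small bonuses on your side worth noting: phrasing the lower bound via $\alpha(C_k^m)=\lfloor k/(m+1)\rfloor$ through cyclic gaps is tighter than the paper's appeal to consecutive cliques, and your observation that $q\geq2$ in the regime $m<\lfloor k/2\rfloor$ is what guarantees both cyclic directions between same-colored vertices cross at least one full arc.
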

\begin{proof}
{First, we prove part $(a)$. It is easy to see that $\chi(C_k^{m})=\chi(K_k)=k$ provided that $m\geq\lfloor\frac{k}{2}\rfloor$. Thus, suppose that $m<\lfloor\frac{k}{2}\rfloor$. Obviously, we can show that
$\omega(C_k^{m})=m+1$. In fact, any set of $m+1$ consecutive vertices
of $C_k^{m}$ is a clique of maximum size. Therefore, $\chi(C_k^{m})\geq
m+1$. Let $V(C_k)=\{1,2,\ldots,k\}$ and $E(C_k)=\{\{1,2\},\{2,3\},\ldots,\{k-1,k\},\{k,1\}\}$. If $m+1|k$, then
$\lceil\frac{k}{\lfloor\frac{k}{m+1}\rfloor}\rceil=m+1$. Hence, to have a
proper coloring of $C_k^{m}$ with $m+1$ colors, it's enough to color
the vertices $i+j(m+1)$ with the color $i$ when $1\leq i\leq m+1$
and $0\leq j\leq \frac{k}{m+1}-1$. Finally, suppose that $m+1\nmid
k$. Because any set of $m+1$ consecutive vertices of $C_k^{m}$ is a
clique of maximum size, then any stable set and specially any color
class of a proper coloring, has at most
$\lfloor\frac{k}{m+1}\rfloor$ vertices. Therefore,
$\chi(C_k^{m})\lfloor\frac{k}{m+1}\rfloor\geq k$ that concludes
$\chi(C_k^{m})\geq
\lceil\frac{k}{\lfloor\frac{k}{m+1}\rfloor}\rceil$. Now we show that
$\lceil\frac{k}{\lfloor\frac{k}{m+1}\rfloor}\rceil$ colors are
enough. Let $k=(m+1)q+r$ and $1\leq r<m+1$. Then we have
$\lceil\frac{k}{\lfloor\frac{k}{m+1}\rfloor}\rceil=m+1+\lceil\frac{r}{q}\rceil$.
Suppose that $r=qq_{1}+r_1$ and $0\leq r_1<q$. In what follows,
we partite $V(C_k^{m})$ to $q$ subsets, such that each of these subsets contains
$m+1+\lceil\frac{r}{q}\rceil$ or $m+1+\lfloor\frac{r}{q}\rfloor$
consecutive vertices and then we color the vertices
of each subset with the colors $\{1,2,\ldots,\chi\}$ consecutively.\\
Precisely, to have a proper coloring of $C_k^{m}$ with $m+1+\lceil\frac{r}{q}\rceil$ colors, we consider two cases.\\
\emph{\textbf{Case 1.}} If $1\leq r_1<q$, it's enough
to color the vertices $i+j(m+1+\lceil\frac{r}{q}\rceil)$ with the
color $i$ when $1\leq i\leq m+1+\lceil\frac{r}{q}\rceil$ and $0\leq
j\leq r_1-1$. Also, color the vertices
$i+j(m+1+\lfloor\frac{r}{q}\rfloor)+r_1(m+1+\lceil\frac{r}{q}\rceil)$
with color $i$ when $1\leq i\leq m+1+\lfloor\frac{r}{q}\rfloor$ and
$0\leq j\leq q-r_1-1$.\\
\emph{\textbf{Case 2.}} If $r_1=0$, then $\lfloor\frac{r}{q}\rfloor=\lceil\frac{r}{q}\rceil$ and it's enough
to color the vertices $i+j(m+1+\lceil\frac{r}{q}\rceil)$ with the
color $i$ when $1\leq i\leq m+1+\lceil\frac{r}{q}\rceil$ and $0\leq
j\leq q-1$.\\
Now we prove part $(b)$. We know that $d_{P_k}(x,y)\leq k-1$. So if $m+1\geq k$, then
$P_k^{m}=K_k$ and $\chi(C_k^{m})=k$. Now suppose that $m+1<k$. We show that $m+1$ colors are enough. Let $l=s(m+1)>k+m+1$ and
consider the graph $C_l^m$. In view of the first part, this graph has
a proper coloring with $m+1$ colors. Because any $k$ consecutive
vertices of $C_l^m$ induce a subgraph isomorphic to $P_k^{m}$, so any
proper coloring of $C_l^m$ gives us a proper coloring of $P_k^{m}$.}
\end{proof}
The following corollary is a direct consequence of Theorem 2.
\begin{cor}
{Let $m,n,k\in \mathbb{N}$ and $k\geq3$. Then\\
$(a)$ $\chi(C_k^{\frac{m}{n}})=\left\{\begin{array}{ll}nk&m\geq\frac{nk}{2}\\
    \lceil\frac{nk}{\lfloor\frac{nk}{m+1}\rfloor}\rceil&m<\frac{nk}{2},\end{array}\right.$\\
$(b)$ $\chi(P_k^{\frac{m}{n}})=min\{m+1,(k-1)n+1\}$.
}
\end{cor}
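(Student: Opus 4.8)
The plan is to reduce Corollary 1 directly to Theorem 2 by identifying the $n$-subdivisions of a cycle and of a path explicitly. The key observation is that subdividing every edge of a cycle or a path again produces a cycle or a path, respectively. Concretely, since $C_k$ has exactly $k$ edges, replacing each edge by a path of length $n$ produces a single cycle on $nk$ vertices, so $C_k^{\frac{1}{n}}=C_{nk}$. Likewise $P_k$ has $k-1$ edges, and the subdivision inserts $n-1$ i-vertices into each of them, giving $k+(k-1)(n-1)=(k-1)n+1$ vertices in all; since the result is still a single path, $P_k^{\frac{1}{n}}=P_{(k-1)n+1}$.

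First I would record these two identities. Then, using the definition $G^{\frac{m}{n}}=(G^{\frac{1}{n}})^m$, I would substitute to obtain $C_k^{\frac{m}{n}}=C_{nk}^m$ and $P_k^{\frac{m}{n}}=P_{(k-1)n+1}^m$. Part $(b)$ then follows immediately by applying Theorem 2$(b)$ to the path on $(k-1)n+1$ vertices, which gives $\chi(P_{(k-1)n+1}^m)=\min\{m+1,(k-1)n+1\}$, exactly the claimed formula. For part $(a)$, applying Theorem 2$(a)$ to $C_{nk}^m$ yields $nk$ when $m\geq\lfloor\frac{nk}{2}\rfloor$ and $\lceil nk/\lfloor\frac{nk}{m+1}\rfloor\rceil$ otherwise.

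The one point that needs care --- and the only real obstacle --- is that Theorem 2$(a)$ is phrased with the threshold $\lfloor\frac{nk}{2}\rfloor$, whereas Corollary 1$(a)$ states the cases in terms of $\frac{nk}{2}$. These thresholds differ only when $nk$ is odd, at the single value $m=\frac{nk-1}{2}=\lfloor\frac{nk}{2}\rfloor$, which Theorem 2 places in the first branch (answer $nk$) but the corollary places in the second. I would verify that at this value the second formula also evaluates to $nk$: here $m+1=\frac{nk+1}{2}$, so $\lfloor\frac{nk}{m+1}\rfloor=\lfloor\frac{2nk}{nk+1}\rfloor=1$ for every $nk\geq3$ (as $1<\frac{2nk}{nk+1}<2$), whence $\lceil nk/1\rceil=nk$. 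Thus both branches agree on this boundary value, the two formulations coincide, and the corollary follows verbatim from Theorem 2.
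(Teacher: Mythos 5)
Your proposal is correct and follows essentially the same route as the paper, whose entire proof consists of the identities $C_k^{\frac{m}{n}}=C_{nk}^m$ and $P_k^{\frac{m}{n}}=P_{(k-1)n+1}^m$ followed by an appeal to Theorem 2. Your additional verification that the two case thresholds ($\lfloor\frac{nk}{2}\rfloor$ in Theorem 2 versus $\frac{nk}{2}$ in the corollary) agree at the boundary value $m=\frac{nk-1}{2}$ when $nk$ is odd is a worthwhile check that the paper silently omits, and your computation there is correct.
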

\begin{proof}
{ Note that $C_k^{\frac{m}{n}}=C_{kn}^m$ and
$P_k^{\frac{m}{n}}=P_{(k-1)n+1}^m$. Hence, in view of Theorem 2, we can easily
conclude this corollary.}
\end{proof}
In Theorem 2 and Corollary 1 we consider the graphs with maximum
degree 2. Hereafter, we focus on the graphs with maximum degree greater than 2.
\begin{rem}
{Let $G$ be a connected graph and $n$ be a positive integer greater than 1. Then easily one can see that at most three colors are enough to achieve a proper coloring of $G^{\frac{1}{n}}$. Precisely,
\[\chi(G^{\frac{1}{n}})=\left\{\begin{array}{cl}3&n\equiv 1(mod\thinspace2)\ \& \ \chi(G)\geq3\\2&otherwise.\end{array}\right.\]
}
\end{rem}
In Lemmas $1-4$, we construct the steps of the proof of
Theorem 3 which states that
$\chi(G^{\frac{2}{n}})=\omega(G^{\frac{2}{n}})$ for any positive integer $n$ greater than 2.
\begin{lem}
{Let $G$ be a graph, $n,m\in \mathbb{N}$ and $m<n$. If
$\chi(G^{\frac{m}{n}})=\omega(G^{\frac{m}{n}})$, then
$\chi(G^{\frac{m}{n+m+1}})=\omega(G^{\frac{m}{n+m+1}})$.  }
\end{lem}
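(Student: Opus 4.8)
The plan is to relate the structure of a fractional power $G^{\frac{m}{n}}$ to that of $G^{\frac{m}{n+m+1}}$ by understanding how increasing the subdivision parameter $n$ (while keeping $m$ fixed) affects the graph. The key observation is that both graphs are built from the same underlying graph $G$ by first subdividing each edge and then taking the $m$-th power. Passing from $G^{\frac{1}{n}}$ to $G^{\frac{1}{n+m+1}}$ lengthens each hyperedge by exactly $m+1$ internal vertices. My strategy is to show that a proper coloring of $G^{\frac{m}{n}}$ can be extended to a proper coloring of $G^{\frac{m}{n+m+1}}$ using the \emph{same} set of colors, which gives $\chi(G^{\frac{m}{n+m+1}}) \leq \chi(G^{\frac{m}{n}})$. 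Combined with the fact that both clique numbers agree (by Theorem~1, since $\omega(G^{\frac{m}{n}})$ depends only on $m$ and $\Delta(G)$, not on $n$, as long as $m<n$), this will close the argument.

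First I would fix a proper coloring $c$ of $G^{\frac{m}{n}}$ using $\omega(G^{\frac{m}{n}})$ colors, which exists by hypothesis. The t-vertices and their colors are inherited directly, since the t-vertices are the same in both graphs. The task reduces to recoloring the internal vertices of each hyperedge so that the new, longer hyperedge is properly colored in $G^{\frac{m}{n+m+1}}$ while remaining consistent at the two endpoints. The crucial fact from Theorem~2(b) and Corollary~1 is that a path power $P_k^m$ needs only $\min\{m+1,k\}$ colors; since each hyperedge together with its two t-endpoints forms a path whose $m$-th power is a clique-bounded structure, I have enough freedom to insert a block of $m+1$ extra vertices. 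The plan is to exhibit an explicit periodic coloring along each lengthened hyperedge: because the added segment has length $m+1$, I can reuse a full period of colors $\{1,2,\ldots,m+1\}$ (or a suitable cyclic shift matching the fixed endpoint colors) on the inserted vertices so that no two vertices within distance $m$ receive the same color.

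The main obstacle will be verifying the consistency at the junctions where the inserted segment meets the original coloring, i.e. ensuring that the colors assigned to the new internal vertices conflict neither with the nearby original internal vertices (within distance $m$ along the hyperedge) nor with the already-fixed t-vertex colors at the two ends. This is exactly where the offset $m+1$ is doing the work: inserting precisely $m+1$ vertices means the coloring pattern can absorb one full period, so the phase of the periodic coloring on the far side of the insertion is realigned to match the original pattern. I would handle this by treating each hyperedge as an instance of a path-power coloring (invoking Corollary~1(b)), and checking that the boundary constraints imposed by the two t-vertices are compatible with a length-$(m+1)$ insertion. Since $m<n$ guarantees that within any single hyperedge the power $m$ cannot reach across from one t-vertex to the other, the two endpoints can be treated independently, which keeps the junction analysis local.

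Finally I would assemble these per-hyperedge colorings into a global proper coloring of $G^{\frac{m}{n+m+1}}$. The only cross-hyperedge interactions in $G^{\frac{m}{n}}$ occur near the t-vertices, within distance $m$; since $n+m+1 > n > m$, lengthening the hyperedges only pushes the internal vertices of distinct hyperedges \emph{farther} apart, so no new adjacencies are created across hyperedges and the local cliques around each t-vertex are unchanged. Thus the global coloring remains proper, yielding $\chi(G^{\frac{m}{n+m+1}}) \leq \chi(G^{\frac{m}{n}}) = \omega(G^{\frac{m}{n}}) = \omega(G^{\frac{m}{n+m+1}})$, and since the reverse inequality $\chi \geq \omega$ always holds, equality follows.
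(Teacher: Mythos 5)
Your high-level skeleton agrees with the paper's: since $\omega(G^{\frac{m}{n}})$ depends only on $m$ and $\Delta(G)$ (Theorem 1), it equals $\omega(G^{\frac{m}{n+m+1}})$, and it suffices to extend a proper coloring of $G^{\frac{m}{n}}$ to $G^{\frac{m}{n+m+1}}$ with the same palette. But the core step --- actually coloring the $m+1$ inserted vertices of each hyperedge --- has a genuine gap. Your mechanism rests on the inserted block ``absorbing one full period'' so that ``the phase of the periodic coloring on the far side of the insertion is realigned to match the original pattern.'' This presupposes that the given coloring is periodic with period $m+1$ along each hyperedge, and nothing in the hypothesis grants that: the coloring is an \emph{arbitrary} proper coloring with $\omega(G^{\frac{m}{n}})$ colors, and for $\Delta(G)\geq 3$ the palette has $\frac{m}{2}\Delta(G)+1$ (resp.\ $\frac{m-1}{2}\Delta(G)+2$) colors, strictly more than $m+1$, so the restriction of $f$ to a hyperedge is just some proper sequence with no periodic structure. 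Corollary 1(b) only says $P_k^m$ is $(m+1)$-colorable \emph{in isolation}; here the $m+1$ new vertices form a clique in the power and must simultaneously avoid the arbitrary colors already present within distance $m$ on \emph{both} sides of the insertion --- including, when the insertion is near a t-vertex, colors sitting on other hyperedges through that t-vertex, of which there can be on the order of $(\Delta(G)-1)m$. A fixed palette $\{1,\ldots,m+1\}$ with one cyclic shift cannot be matched against two independent arbitrary boundaries, and naive greedy counting does not rescue this either: a new vertex can see up to $2m$ forbidden colors, while for $\Delta(G)=2$ only $m+1$ colors are available, so one would need a system-of-distinct-representatives argument that you do not supply.

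The paper closes exactly this gap with a \emph{duplication} trick rather than periodicity. It inserts the $(m+2)$-path into the edge between the t-vertex $i$ and $i(1)j$, and gives the $k$-th new vertex $i_k$ the color $f(i(k)j)$ of the already-colored vertex at distance $k$ from $i$ along $P_{ij}$. Then every color occurring within distance $m$ of $i_k$ in $G^{\frac{m}{n+m+1}}$ already occurs within distance $m$ of $i(k)j$ in $G^{\frac{m}{n}}$: vertices beyond $i$ (on other hyperedges) lie at the same distance from $i_k$ as from $i(k)j$, and the remaining relevant vertices carry colors $f(i(l)j)$ with $|l-k|\leq m$. So properness at the new vertices is inherited from properness of $f$, with no case analysis on boundary phases at all. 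Your final paragraph --- that for two old vertices with $f(x)=f(y)$ one has $d_{G^{\frac{m}{n+m+1}}}(x,y)\geq d_{G^{\frac{m}{n}}}(x,y)>m$ --- is exactly the second half of the paper's argument and is correct; what is missing is a valid replacement for the first half.
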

\begin{proof}
{Note that $\omega(G^{\frac{m}{n}})=\omega(G^{\frac{m}{n+m+1}})$. Hence, it is sufficient
to show that $\chi(G^{\frac{m}{n+m+1}})\leq\chi(G^{\frac{m}{n}})$. Let
$f:V(G^{\frac{m}{n}})\rightarrow [\chi]$ be a proper coloring of
$G^{\frac{m}{n}}$ with $\chi=\chi(G^{\frac{m}{n}})$ colors. Choose the edge
incident to vertex $i$ in the hyperedge $P_{ij}$, for any $i<j$. Then replace each of these edges by an $(m+2)-$path, to
make $G^{\frac{m}{n+m+1}}$. Now we color the new vertices on the
hyperedge $P_{ij}$, by the set of colors $S_{ij}=\{f(v)|v\in P_{ij}\}$,
such that the resulting
coloring ($f':V(G^{\frac{m}{n+m+1}})\rightarrow [\chi]$) is a proper
coloring of $G^{\frac{m}{n+m+1}}$. Precisely, suppose that an edge
between $i$ and $i(1)j$ is replaced by an $(m+2)-$path
$(i,i_1,i_2,\ldots,i_{m+1},i(1)j)$. Now use the color of $i(k)j$ to
color the new vertex $i_k$ when $1\leq k\leq m+1$. Easily we
can show that this coloring is a proper coloring. Consider
$f'(N_m(i_k)\setminus \{i_k\})$ that is the colors of all vertices with distance
$d\in\{1,2,\ldots,m\}$ from $i_k$. Because $f'(i_l)=f(i(l)j)$ so
$f'(N_m(i_k)\setminus \{i_k\})\subseteq f(N_m(i(k)j)\setminus \{i(k)j\})$. Then the color of $i_k$ is different from the color of any other vertex in $N_m(i_k)$. In addition, for any two vertices $x$ and
$y$ of $G^{\frac{m}{n}}$ with $f(x)=f(y)$, we have
\[d_{G^{\frac{m}{n+m+1}}}(x,y)\geq d_{G^{\frac{m}{n}}}(x,y)>m.\]
Hence, the coloring $f'$ is a proper coloring for $G^{\frac{m}{n+m+1}}$.
}
\end{proof}
\begin{lem}
{Let $G$ be a connected graph of order $n$. Then there is
a labeling $f:V(G)\rightarrow[n]$, such that,\\
$(a)$ for any two distinct vertices $i$ and $j$ of $G$, $f(i)\neq f(j)$ and\\
$(b)$ for any vertex $x$, if $f(x)\neq n$, then $f(x)<f(y)$ for some vertex $y\in N(x)$.}
\end{lem}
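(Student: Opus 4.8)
The plan is to use the connectivity of $G$ to produce a linear ordering of its vertices in which every vertex except a single distinguished one is preceded by one of its neighbors, and then to read off the labels in the reverse direction so that this preceding neighbor always receives the larger label. The statement is really the assertion that a connected graph admits such a ``connected ordering'', packaged into the desired labeling.

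Concretely, since $G$ is connected I would fix an arbitrary vertex $r$ and run a breadth-first (or depth-first) search from $r$, obtaining an enumeration $v_1=r,v_2,\ldots,v_n$ of $V(G)$. The defining property of such a search on a connected graph is that each $v_i$ with $i\geq 2$ has at least one neighbor among the already-visited vertices $\{v_1,\ldots,v_{i-1}\}$, namely its parent in the search tree; equivalently, every prefix $\{v_1,\ldots,v_i\}$ induces a connected subgraph. One can justify the existence of such an enumeration directly by induction on $i$: having chosen a connected prefix of size $i<n$, connectivity of $G$ forces an edge with exactly one endpoint inside the prefix, and that outside endpoint is taken as $v_{i+1}$.

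Next I would define $f(v_i)=n-i+1$, so that $f(v_1)=f(r)=n$ and the labels decrease along the search order. Condition $(a)$ is then immediate, since $f$ is a bijection from $V(G)$ onto $[n]$. For condition $(b)$, let $x=v_i$ with $f(x)\neq n$, that is $i\geq 2$; its search-tree parent $v_j$ satisfies $j<i$, whence $f(v_j)=n-j+1>n-i+1=f(x)$ while $v_j\in N(x)$. Thus every vertex other than the root has a neighbor of strictly larger label, as required.

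I do not expect a serious obstacle here; the only point requiring care is to orient the reversal correctly, assigning the label $n$ to the root so that the earlier-visited parent always carries the larger value, rather than the smaller one. The rest is the routine verification that a connected search order supplies, for each non-root vertex, the needed larger-labeled neighbor.
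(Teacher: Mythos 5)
Your proof is correct, but it builds the labeling by a genuinely different construction than the paper's. The paper argues by induction on $n$: it invokes the fact that a connected graph contains a vertex $x$ whose deletion leaves the graph connected (a non-cut vertex), assigns $x$ the label $1$, and shifts a coloring order of $G[V(G)\setminus\{x\}]$ up by one; unrolled, this peels vertices off from the low-label end, with connectivity of the remainder supplying each peeled vertex a higher-labeled neighbor. You instead grow the ordering from the high-label end: a BFS/DFS (or greedy connected) enumeration $v_1,\ldots,v_n$ from a root, with $f(v_i)=n-i+1$, so that every non-root vertex has its search parent as a neighbor of strictly larger label. These are dual readings of the same object (a connected vertex ordering), but they rest on different elementary facts: the paper's induction needs the existence of a non-cut vertex, which it asserts without proof (it follows, e.g., by taking a leaf of a spanning tree), whereas you need only that some edge leaves any proper connected prefix, which you justify directly from connectedness of $G$. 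Your version is thus arguably more self-contained, and it replaces the recursion and relabeling bookkeeping with a single explicit formula; the paper's version has the mild advantage of not requiring you to set up the search order, delegating all work to the induction hypothesis. Your closing caution about orienting the reversal correctly is exactly the right point of care, and you handle it correctly by giving the root the label $n$.
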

\begin{proof}
{A labeling $f$ of $V(G)$ is called a \emph{coloring order} of $G$, if it satisfies two properties $(a)$ and $(b)$. The proof is by induction on $n$. Obviously, the lemma is correct for $n=1$. Assume that there is a
coloring order for any connected graph of order $n-1$. Because $G$
is connected, so there is a vertex $x$, such that $G[V(G)\setminus \{x\}]$ is
connected. Then $G[V(G)\setminus \{x\}]$ has a coloring order like
$f:G[V(G)\setminus \{x\}]\rightarrow[n-1]$. Now we can define a coloring order
for $G$ as follows:\\
\[f'(i)=\left\{\begin{array}{ll}1&i=x\\f(i)+1&i\neq x.\end{array}\right.\]
Clearly one can check that $f'$ is a coloring order for $G$. }
\end{proof}
\begin{lem}
{Let $G$ be a connected graph with $\Delta(G)\geq3$. Then
$\chi(G^{\frac{2}{3}})=\Delta(G)+1=\omega(G^{\frac{2}{3}})$.}
\end{lem}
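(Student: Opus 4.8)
The plan is to prove the two inequalities $\chi(G^{\frac{2}{3}})\geq \Delta(G)+1$ and $\chi(G^{\frac{2}{3}})\leq \Delta(G)+1$ separately. The lower bound is immediate: applying Theorem 1 with $m=2$, $n=3$ (so $m$ is even and $\Delta(G)\geq 2$) gives $\omega(G^{\frac{2}{3}})=\frac{2}{2}\Delta(G)+1=\Delta(G)+1$, and $\chi\geq\omega$ always holds. So the whole content is the upper bound, i.e. producing a proper colouring of $G^{\frac{2}{3}}=(G^{\frac{1}{3}})^2$ with $\Delta(G)+1$ colours. First I would record the local structure. Each hyperedge $P_{ij}$ has two internal vertices $i(1)j$ and $i(2)j=j(1)i$, and squaring the $3$-subdivision yields exactly these adjacencies: (i) two t-vertices are never adjacent; (ii) each t-vertex $i$ together with its near internal vertices $\{\,i(1)j : j\in N_G(i)\,\}$ forms a clique $K_i$ of size $d_G(i)+1$; (iii) the two internal vertices $i(1)j$ and $j(1)i$ of a single hyperedge are adjacent; (iv) each internal vertex is adjacent to both endpoints of its hyperedge. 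In particular the only adjacencies among internal vertices are ``same t-vertex'' (inside some $K_i$) or ``same hyperedge'', and each hyperedge spans a copy of $K_4$ with the edge $ij$ deleted.

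Given this, my colouring uses the palette $\{1,\dots,\Delta(G)+1\}$ and reserves the single colour $\Delta(G)+1$ for all t-vertices at once. This is legitimate by (i), and since every internal vertex is adjacent to a t-vertex by (iv), it will be coloured from $\{1,\dots,\Delta(G)\}$; hence adjacencies of types (ii) and (iv) to the t-vertices are satisfied automatically. This reduces the task to colouring the internal vertices with the $\Delta(G)$ colours $\{1,\dots,\Delta(G)\}$ so that $(A)$ for each $i$ the near vertices $\{i(1)j\}_{j\in N_G(i)}$ receive pairwise distinct colours, and $(C)$ the two internal vertices $i(1)j$ and $j(1)i$ of each hyperedge receive distinct colours. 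Regarding the colour of $i(1)j$ as attached to the arc $i\to j$, condition $(A)$ says the out-arcs at each vertex are rainbow and $(C)$ says the two arcs of an edge differ; so what remains is exactly an ``arc colouring'' of $G$ with $\Delta(G)$ colours.

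To construct it I would invoke the coloring order $f:V(G)\to[\,|V(G)|\,]$ of Lemma 2 and process the vertices in increasing $f$-order. When $i$ is reached I colour all its near vertices $i(1)j$ simultaneously: a near vertex pointing to an already-processed (lower) neighbour $j$ must only avoid the single colour of the opposite arc $j(1)i$ by $(C)$, whereas a near vertex pointing to a not-yet-processed (higher) neighbour is unconstrained. Since $f$ is a coloring order, every vertex except the $f$-maximal one has a neighbour of larger label, so at least one near vertex of $i$ carries the full list $\{1,\dots,\Delta(G)\}$, while the remaining at most $d_G(i)-1\leq\Delta(G)-1$ near vertices have lists of size at least $\Delta(G)-1$. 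A short verification of Hall's condition (the full-list vertex covers every subset meeting it, and any subset of the constrained near vertices has size at most $\Delta(G)-1$ while each of their lists has size at least $\Delta(G)-1$) shows a rainbow choice for $K_i$ always exists, so the greedy never stalls at a non-maximal vertex.

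The hard part will be the $f$-maximal vertex $r$, processed last, all of whose near vertices are constrained. If $d_G(r)<\Delta(G)$ the same Hall count still goes through, so the only dangerous situation is a top vertex of full degree $\Delta(G)$; and there Hall fails \emph{only} when all of the opposite colours $\{\,c(w(1)r):w\in N_G(r)\,\}$ coincide, in which case $r$'s $\Delta(G)$ near vertices would have to avoid one common colour using only $\Delta(G)$ colours. For non-regular $G$ I would sidestep this entirely: using the freedom in Lemma 2 (peeling a spanning tree toward a chosen vertex) one can arrange the coloring order so that a vertex of degree $<\Delta(G)$ is $f$-maximal, and then $r$ causes no trouble. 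The genuine core is therefore the $\Delta(G)$-regular case, where one must guarantee that the colours $c(w(1)r)$ are \emph{not all equal}; this can be secured by spending, at the neighbours $w$ of $r$, the freedom their $r$-directed arcs enjoy while $r$ is still unprocessed (if the opposite colours are distinct, $r$ is then coloured by a permutation avoiding a fixed one, which exists for $\Delta(G)\geq 2$). The delicate point — and the step I expect to require the most care — is making this spreading compatible with the clique constraints already forced at each $w$, in particular when $r$ is the only higher-labelled neighbour of $w$.
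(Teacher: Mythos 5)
Your skeleton matches the paper's proof closely: the lower bound via Theorem 1 and $\chi\geq\omega$, one reserved color for all t-vertices, the reformulation as a rainbow choice on the near i-vertices at each vertex, processing vertices along the coloring order of Lemma 2, and Hall's theorem at every intermediate step are all exactly the paper's moves, and your analysis of when Hall can fail at the last vertex $r$ (only if $d_G(r)=\Delta$ and all opposite colors $c(w(1)r)$, $w\in N_G(r)$, coincide) is correct. Your handling of the non-regular case by re-rooting the coloring order at a vertex of degree $<\Delta(G)$ is a legitimate, mildly different alternative to the paper's device of embedding $G$ in a connected $\Delta$-regular graph, and your observation that two distinct opposite colors suffice to finish (a bijection avoiding the singleton forbidden lists exists) matches the paper's derangement step.

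However, there is a genuine gap exactly at the point you flag: the regular case with all opposite colors equal. Your plan to ``spend, at the neighbours $w$ of $r$, the freedom their $r$-directed arcs enjoy while $r$ is still unprocessed'' cannot be implemented locally. When $w$ is processed in the regular case, the rainbow choice at $w$ is a bijection onto all $\Delta$ colors; so if the already-colored opposite arcs at $w$ all carry the same color $a$, then $a$ is forced onto the unique unconstrained near vertex of $w$, which may be precisely $w(1)r$. Hence one cannot in general steer $c(w(1)r)$ away from a prescribed color, and no forward-looking version of your plan closes the case --- the very compatibility issue you admit needing ``the most care'' is where the argument breaks. The paper resolves it with an a posteriori Kempe-chain swap: if all opposite colors equal $a$, pick another color $b$ and consider the component $H$ of the subgraph of $G^{\frac{2}{3}}$ induced by the vertices colored $a$ or $b$ that contains one offending vertex $f_{i_1}(1)f_n$. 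Every vertex of $H$ has degree at most $2$ (the near vertices at any processed vertex are rainbow, so each i-vertex sees at most one $a$- or $b$-colored vertex through its t-vertex, plus possibly its opposite i-vertex), and the offending vertex has degree $1$ in $H$ because its opposite vertex $f_n(1)f_{i_1}$ is still uncolored; therefore $H$ is a path, and interchanging $a$ and $b$ along $H$ preserves propriety while making the opposite colors at $r$ non-constant, after which your own SDR/derangement step finishes. Some such global recoloring argument is what your proposal is missing; without it the proof is incomplete at its crux.
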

\begin{proof}
{It is well-known that any connected graph with maximum degree $\Delta$ is a
subgraph of a connected $\Delta-$regular graph. Hence, it is sufficient to prove the lemma
for the connected $\Delta-$regular graph $G$ of order $n$.\\
Color all t-vertices by the color 0. We show that the colors of
$C=\{1,2,\ldots,\Delta\}$ are enough to color i-vertices.
Suppose that $f$ is a coloring order of $G$ and $f^{-1}(i)$ is
denoted by $f_i$ . We color the i-vertices of $N(f_1)$, $N(f_2)$,
$\ldots$ and $N(f_n)$ consecutively in $n$ steps:\\
\emph{Step 1.} We color optionally the i-vertices of $N(f_1)$
with the colors of $C$.\\
\emph{Step 2.} If two vertices $f_1$ and $f_2$ are not adjacent in
$G$, then we can color optionally the i-vertices of $N(f_2)$. If
$f_1$ and $f_2$ are adjacent in $G$, then at first we color the
i-vertex $f_2(1)f_1$, with color $c_1$ which is different from the
color of $f_1(1)f_2$. Then we color the other i-vertices of $N(f_2)$
with the colors of $C\setminus\{c_1\}$ optionally.\\
\emph{Step $k$ $(3\leq k\leq n-1)$.}
By considering the colored vertices in previous steps, there are $\Delta$ or $\Delta-1$ colors to color each vertex of $N(f_k)$. Now by using Hall's Theorem [4, page 419], we can find a perfect matching between the vertices of $N(f_k)$ and the colors $C=\{1,2,\ldots,\Delta\}$ which leads us to a coloring of the vertices of $N(f_k)$.\\
\emph{Step $n$.} Let
$N_G(f_n)=\{f_{i_1},f_{i_2},\ldots,f_{i_{\Delta}}\}$. Suppose that
$c_j$ is the color of $f_{i_j}(1)f_n$, for any $1\leq j\leq \Delta$. If
$|\{c_j|1\leq j\leq \Delta\}|=k\geq2$, then we select $k$ vertices of
\[\{f_{i_1}(1)f_n,f_{i_2}(1)f_n,\ldots,f_{i_{\Delta}}(1)f_n\}\]
with different colors and color their neighbors in $N(f_n)$, with a
disarrangement of that $k$ colors. After that, we can color the
remaining i-vertices of $N(f_n)$, with the colors of $C\setminus\{c_1,c_2,\ldots,c_{\Delta}\}$.\\
Now let $k=1$. In other words, assume that $c_1=c_2=\ldots=c_{\Delta}=a$.\\
To color the i-vertices of $N(f_n)$ in this case, we need to
change the color of one i-vertex of
$F=\{f_{i_1}(1)f_n,f_{i_2}(1)f_n,\ldots,f_{i_{\Delta}}(1)f_n\}$.\\
Consider the subgraph of $G^{\frac{2}{3}}$, induced by the vertices of colors $a$ and $b$ and let $H$ be the connected component of this subgraph, containing the vertex $f_{i_1}(1)f_n$. Clearly, we have $1\leq d_H(x)\leq 2$ for each $x\in V(H)$ and $d_H(f_{i_1}(1)f_n)=1$. Therefore, $H$ is a path and we can interchange $k$ to 2, by changing the colors $a$ and $b$ in $H$ and then we can color the
i-vertices of $N(f_n)$. }
\end{proof}
\begin{lem}
{Let $G$ be a connected graph with $\Delta(G)\geq3$. Then\\
$(a)$ $\chi(G^{\frac{2}{4}})=\Delta(G)+1=\omega(G^{\frac{2}{4}})$,\\
$(b)$ $\chi(G^{\frac{2}{5}})=\Delta(G)+1=\omega(G^{\frac{2}{5}})$.}
\end{lem}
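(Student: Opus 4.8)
The plan is to prove in both parts that $\chi(G^{\frac{m}{n}})\le\Delta+1$, since Theorem 1 (with $m=2$ even) already gives $\omega(G^{\frac{2}{4}})=\omega(G^{\frac{2}{5}})=\Delta+1$, so matching upper and lower bounds yields equality. In $G^{\frac{1}{n}}$ each hyperedge $P_{ij}$ has internal vertices $i(1)j,\dots,i(n-1)j$; I would split these into the two \emph{near-end} i-vertices $i(1)j$ and $j(1)i$ (those at distance $1$ in $G^{\frac1n}$ from a t-vertex, hence lying in a maximum clique of the power) and the remaining \emph{middle} i-vertices. The near-end vertices carry the size-$(\Delta+1)$ clique constraint around each t-vertex and so must be colored first; the middle vertices have very few already-colored neighbors and will be filled in last.

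For part $(a)$, I would first reduce to a $\Delta$-regular $G$ and color every t-vertex $0$, exactly as in Lemma 3. The point is that in $G^{\frac{2}{4}}$ one has $d_{G^{\frac14}}(i(1)j,i(3)j)=2$, so the two near-ends $i(1)j$ and $i(3)j=j(1)i$ of a hyperedge are adjacent, while $i(3)j$ is \emph{not} adjacent to $i$ (their distance is $3$). Consequently the t-vertices together with the near-end vertices of $G^{\frac24}$ inherit only the adjacencies already present among the t-vertices and i-vertices of $G^{\frac23}$ — the clique at each t-vertex, and one antiparallel edge $i(1)j\sim j(1)i$ per hyperedge — possibly with a few t-vertex edges deleted. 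Since the coloring built in Lemma 3 places $0$ on the t-vertices and colors from $\{1,\dots,\Delta\}$ on the i-vertices, distinct on each clique and differing across each antiparallel pair, it remains proper after those deletions, and I would reuse it verbatim (coloring order from Lemma 2, Hall's Theorem at the intermediate steps, the Kempe interchange at the last step). It then remains to color each middle $i(2)j$: its only neighbors are $i,j$ (color $0$) and $i(1)j,i(3)j$, at most three forbidden colors, so with $\Delta+1\ge4$ colors a free one always exists; as middles of distinct hyperedges are non-adjacent, a single greedy pass finishes.

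For part $(b)$ the near-ends become harmless but the middles become the difficulty. In $G^{\frac{2}{5}}$ the near-ends $i(1)j$ and $j(1)i=i(4)j$ are at distance $3$, hence non-adjacent, so the near-ends carry no cross-hyperedge constraint. The two middles $i(2)j,i(3)j$ are adjacent, with $i(2)j\sim\{i,i(1)j,i(3)j,i(4)j\}$ and $i(3)j\sim\{j,i(4)j,i(2)j,i(1)j\}$. Here coloring all t-vertices alike fails for $\Delta=3$: if $\mathrm{col}(i)=\mathrm{col}(j)$ and $\mathrm{col}(i(1)j)\ne\mathrm{col}(i(4)j)$, then both middles are driven onto the single remaining color. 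The remedy I would use is that the t-vertices are pairwise non-adjacent in $G^{\frac25}$ (any two lie at distance $\ge5$), so I am free to color them by a \emph{proper} $(\Delta+1)$-coloring $\alpha$ of $G$ itself, which exists since $\chi(G)\le\Delta+1$. Then at each t-vertex $i$ I color $\{i(1)j:j\sim i\}$ with distinct colors from $\{1,\dots,\Delta+1\}\setminus\{\alpha(i)\}$ (possible, and independent across t-vertices since distinct near-end vertices of different t-vertices are non-adjacent). Finally, for each hyperedge, writing $a=\mathrm{col}(i(1)j)$ and $b=\mathrm{col}(i(4)j)$, the middle $i(2)j$ must avoid $\{\alpha(i),a,b\}$ and $i(3)j$ must avoid $\{\alpha(j),a,b\}$ besides each other, so coloring the single edge $i(2)j\,i(3)j$ from these two lists fails only when both lists are equal singletons — a short check shows this forces $\alpha(i)=\alpha(j)$, which the proper coloring excludes. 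Middles of distinct hyperedges being non-adjacent, this completes a proper $(\Delta+1)$-coloring.

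I expect the main obstacle in part $(a)$ to be importing the last-step Kempe interchange of Lemma 3: one must verify that the subgraph of $G^{\frac24}$ spanned by the near-ends of any two colors is still a disjoint union of paths (each near-end has at most two neighbors in a two-color class, namely one clique-mate and its antiparallel partner), so the interchange is legitimate, the still-uncolored middles playing no role at that stage. In part $(b)$ the delicate point is purely the list-coloring case analysis for the single edge $i(2)j\,i(3)j$, which is exactly what dictates replacing the constant coloring of the t-vertices by a proper coloring of $G$.
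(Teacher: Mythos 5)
Your proposal is correct, and it splits against the paper: part $(a)$ is essentially the paper's own argument, while part $(b)$ takes a genuinely different route. For $(a)$, the paper likewise takes the Lemma 3 coloring of $G^{\frac{2}{3}}$, subdivides the central edge of each hyperedge, and colors the new middle vertex with a color $c_{ij}$ unused on $P_{ij}$; your explicit check that the adjacencies among t-vertices and near-ends in $G^{\frac{2}{4}}$ form a subset of those in $G^{\frac{2}{3}}$ is exactly what the paper's ``clearly'' suppresses, and your closing worry about re-verifying the Kempe interchange inside $G^{\frac{2}{4}}$ is unnecessary: the Lemma 3 coloring is transferred as a finished object, so no interchange is ever performed in the new graph. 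For $(b)$, the paper starts instead from a proper edge coloring $f:E(G)\rightarrow\{0,1,\ldots,\Delta\}$ (Vizing's theorem), colors \emph{both} near-ends $i(1)j$ and $j(1)i$ with $f(ij)$ --- the edge-coloring property is precisely what keeps the mutually adjacent near-ends $i(1)j,i(1)j'$ around a common t-vertex distinct --- colors each t-vertex with a color missing from its incident edges, and finishes the two middles by a two-case analysis: if $i$ and $j$ received the same color, only two colors appear on $P_{ij}$ and two spares remain; if different, it swaps, giving $i(2)j$ the color of $j$ and $j(2)i$ the color of $i$, legal since $d_{G^{\frac{2}{5}}}(i(2)j,j)=2$. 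Your alternative puts a proper vertex $(\Delta+1)$-coloring $\alpha$ of $G$ on the t-vertices, rainbows the near-ends around each t-vertex avoiding $\alpha(i)$, and list-colors the middle edge, with the only failure mode (equal singleton lists, which can occur only when $\Delta=3$) forcing $\alpha(i)=\alpha(j)$, contradicting properness; your adjacency and distance checks are accurate. Both arguments are sound: the paper's buys a clean monochromatic-near-end synchronization at the cost of invoking Vizing's theorem, whereas yours needs only a greedy $(\Delta+1)$-coloring of $G$ at the cost of the short list-coloring case analysis.
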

\begin{proof}
{First, we prove part $(a)$. We use the proper coloring of $G^{\frac{2}{3}}$ which is defined in Lemma 3. Suppose that $f:V(G^{\frac{2}{3}})\rightarrow C$ is a
proper coloring of $G^{\frac{2}{3}}$ where
$C=\{0,1,2,\ldots,\Delta(G)\}$. Three colors are appeared  on each
hyperedge of $G^{\frac{2}{3}}$. Because $\Delta(G)+1\geq 4$, so for
each hyperedge $P_{ij}$, there is at least one color, denoted by
$c_{ij}$, which is not used for the coloring of the vertices of $P_{ij}$.
Now on each hyperedge $P_{ij}$, replace the edge between $i(1)j$ and $j(1)i$ by a $2-$path and color the new vertex by $c_{ij}$.
In other words, we have a proper coloring $f'$ for
$G^{\frac{2}{4}}$ as follows:\\
\[f'(x)=\left\{\begin{array}{cl}c_{ij}&x=i(2)j=j(2)i\\f(x)&otherwise.\end{array}\right.\]
Clearly, we can show that $f'$ is a proper coloring for
$G^{\frac{2}{4}}$.\\
Now, we prove part $(b)$. Let $f:E(G)\rightarrow C$ be a proper edge coloring of $G$, where $C=\{0,1,2,\ldots,\Delta(G)\}$. At first, we color the
i-vertices $i(1)j$ and $j(1)i$ with the same color of $f(ij)$. Then we
color all  t-vertices of $G^{\frac{2}{5}}$. Color the t-vertex
$i$, with one color of $C\setminus\{f(ij)|d_G(j,i)=1\}$ which is non-empty. Finally we color two remaining i-vertices of each
hyperedge. There are two cases for each hyperedge $P_{ij}$:\\
\emph{Case 1}. Two vertices $i$ and $j$ have the same color.\\
In this case, only two colors are used on the hyperedge $P_{ij}$.
Because $\Delta(G)+1\geq 4$, so there are at least two colors which
are not appeared on the vertices of $P_{ij}$, and we can color two
uncolored i-vertices of $P_{ij}$, with the remaining colors.\\
\emph{Case 2.} Two vertices $i$ and $j$ have different colors.\\
In this case, we color the i-vertex $i(2)j$ with the color of $j$
and the i-vertex $j(2)i$ with the color of $i$. Note that
$d_{G^\frac{2}{5}}(i(2)j,j)=2=d_{G^\frac{2}{5}}(j(2)i,i)$ and
$d_{G^\frac{2}{5}}(i(2)j,i)=1=d_{G^\frac{2}{5}}(j(2)i,j)$. Clearly,
one can show that this coloring is a proper coloring of
$G^{\frac{2}{5}}$.}
\end{proof}
Now we can prove the following theorem inductively by applying Theorem 1 and Lemmas 1, 3 and 4.
\begin{thm} {Let $G$ be a connected graph with $\Delta(G)\geq3$ and $n$ be a positive integer greater than 2. Then
$\chi(G^{\frac{2}{n}})=\Delta(G)+1=\omega(G^{\frac{2}{n}})$.  }
\end{thm}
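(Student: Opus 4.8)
The plan is to obtain the clique number directly from Theorem 1 and then to prove the chromatic-number equality by induction on $n$, with Lemmas 3 and 4 furnishing the base cases and Lemma 1 furnishing the inductive step.

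First I would read off $\omega(G^{\frac{2}{n}})$. Taking $m=2$, which is even, with $\Delta(G)\geq 3\geq 2$ and $2<n$ (since $n\geq 3$), Theorem 1 gives $\omega(G^{\frac{2}{n}})=\frac{2}{2}\Delta(G)+1=\Delta(G)+1$. In particular $G^{\frac{2}{n}}$ contains a clique on $\Delta(G)+1$ vertices, so $\chi(G^{\frac{2}{n}})\geq\Delta(G)+1$, and it remains only to produce a proper coloring with $\Delta(G)+1$ colors.

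For this upper bound I would induct on $n\geq 3$. The base cases $n=3$, $n=4$ and $n=5$ are precisely the statements of Lemma 3, Lemma 4$(a)$ and Lemma 4$(b)$, each of which already supplies a proper $(\Delta(G)+1)$-coloring. For $n\geq 6$, I apply the induction hypothesis at $n-3$: since $n-3\geq 3$, we have $\chi(G^{\frac{2}{n-3}})=\omega(G^{\frac{2}{n-3}})$. Now invoke Lemma 1 with $m=2$ and subdivision parameter $n-3$; its hypothesis $m<n-3$ holds because $n\geq 6$ forces $n-3\geq 3>2$, and since $(n-3)+m+1=n$ the lemma yields $\chi(G^{\frac{2}{n}})=\omega(G^{\frac{2}{n}})=\Delta(G)+1$. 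As every integer $n\geq 6$ has $n-3\geq 3$, the three base cases together with the step $n-3\mapsto n$ cover all $n\geq 3$.

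The conceptual content has been absorbed into the lemmas, so the present argument is chiefly a matter of organizing them. The only points needing attention are that the requirements of the cited results are met for $m=2$, namely that $m=2$ is even and that the bound $m<n$ in both Theorem 1 and Lemma 1 holds in the relevant ranges, and that the step size $+3$ started from the three consecutive base values $3,4,5$ reaches every $n\geq 3$. I therefore do not anticipate a substantive obstacle in this theorem itself; the genuine difficulty lies in the explicit colorings constructed within Lemmas 3 and 4 and in the color-lifting procedure of Lemma 1, all of which this proof simply assembles.
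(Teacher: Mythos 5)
Your proposal is correct and is exactly the argument the paper intends: the paper's one-line proof ("inductively by applying Theorem 1 and Lemmas 1, 3 and 4") unpacks precisely into your assembly, with Theorem 1 (at $m=2$) giving $\omega(G^{\frac{2}{n}})=\Delta(G)+1$, Lemmas 3 and 4 supplying the base cases $n=3,4,5$, and Lemma 1 (with $m=2$, so step size $n\mapsto n+3$) providing the induction. Your verification of the side conditions ($m<n$ in Theorem 1 and Lemma 1, and that the three consecutive bases plus steps of $3$ reach every $n\geq 3$) is the only bookkeeping the paper left implicit, and you have it right.
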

Here, we show that the conjecture A is true for some rational number $\frac{m}{n}<1$.
\begin{thm}
{Let $G$ be a connected graph with $\Delta(G)\geq3$ and
$m\in \mathbb{N}$. Then $\chi(G^{\frac{m}{m+1}})=\omega(G^{\frac{m}{m+1}})$.
}
\end{thm}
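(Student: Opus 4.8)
The plan is to prove only the upper bound $\chi(G^{\frac{m}{m+1}})\le\omega(G^{\frac{m}{m+1}})$, since $\chi\ge\omega$ holds for every graph. First I would reduce to the regular case: every connected graph of maximum degree $\Delta$ embeds as a subgraph of a connected $\Delta$-regular graph $H$ (the fact already used in Lemma 3). Because $G^{\frac{m}{m+1}}$ is then a subgraph of $H^{\frac{m}{m+1}}$, while Theorem 1 gives $\omega(H^{\frac{m}{m+1}})=\omega(G^{\frac{m}{m+1}})$ (the value depends only on $\Delta$ and the parity of $m$), it suffices to establish $\chi=\omega$ for a connected $\Delta$-regular graph. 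Fixing such a $G$, I would induct on $m$ in steps of two, handling both parities in parallel. The base cases are $m=1$, where $\chi(G^{\frac{1}{2}})=2=\omega$ by Remark 1 and Theorem 1, and $m=2$, where $\chi(G^{\frac{2}{3}})=\Delta+1=\omega$ by Lemma 3. In either parity Theorem 1 shows $\omega(G^{\frac{m}{m+1}})-\omega(G^{\frac{m-2}{m-1}})=\Delta$, so the inductive step must recolor using exactly $\Delta$ colors beyond those of the smaller power.

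The core of the step is to build the coloring of $G^{\frac{m}{m+1}}$ from a proper coloring of $G^{\frac{m-2}{m-1}}$ by \emph{stretching} each hyperedge. Passing from the $(m-1)$-subdivision to the $(m+1)$-subdivision, I insert one new i-vertex adjacent to each endpoint of every hyperedge and move the old i-vertex $i(l)j$ outward to the position at distance $l+1$ from $i$. I keep every t-vertex color and give each shifted old i-vertex the color it carried before. The key bookkeeping is that this symmetric insertion increases the distance between two i-vertices lying in different hyperedges by at least two (exactly two when the hyperedges meet at a common t-vertex), while leaving within-hyperedge distances unchanged. Consequently, for any two old vertices, adjacency in $G^{\frac{m}{m+1}}$ (distance $\le m$) implies adjacency in $G^{\frac{m-2}{m-1}}$ (distance $\le m-2$); hence the transferred coloring is already proper on all old and all t-vertices.

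It then remains to color the $2|E(G)|$ inserted vertices with $\Delta$ fresh colors disjoint from the old palette, so that an inserted vertex can clash only with other inserted vertices. A distance check shows that the inserted vertex adjacent to $x$ on the hyperedge $P_{xy}$ conflicts precisely with the other $\Delta-1$ inserted vertices adjacent to $x$ and with the inserted vertex adjacent to $y$ on the same hyperedge. Thus the needed assignment is exactly a proper $\Delta$-coloring of the auxiliary graph $A$ whose vertices are the half-edges $(x;xy)$ of $G$, two of them adjacent when they share their t-vertex or form the two ends of one edge. This $A$ is $\Delta$-regular; a short argument shows no component is a clique $K_{\Delta+1}$ (the half-edge $(y;xy)$ meets only one of the $\Delta$ half-edges at $x$), and since $\Delta\ge3$ no component is an odd cycle, so Brooks' theorem supplies the coloring. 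Coloring each inserted vertex by the $A$-color of its half-edge completes a proper coloring of $G^{\frac{m}{m+1}}$ with $\omega(G^{\frac{m-2}{m-1}})+\Delta=\omega(G^{\frac{m}{m+1}})$ colors.

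The step I expect to be the main obstacle is the distance lemma underlying the transfer: one must verify carefully that the insertion moves every relevant pairwise distance in the right direction — exactly preserving cross-hyperedge adjacency at a shared t-vertex, and strictly increasing all other cross-hyperedge distances — so that no monochromatic adjacent pair is created among the old vertices and no required adjacency is overlooked (including the mixed t-vertex/i-vertex pairs). Once this is pinned down, the two remaining points are routine: that precisely $\Delta$ new colors are available in both parities, and that Brooks' theorem indeed applies to the $\Delta$-regular half-edge graph $A$.
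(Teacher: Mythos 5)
Your proposal is correct, but it follows a genuinely different route from the paper's. The paper also inducts from the base cases of Remark 1 and Lemma 3, but its inductive step is asymmetric and always launches from even $m=2k$: to reach $G^{\frac{2k+1}{2k+2}}$ it inserts a \emph{single} i-vertex at the centre of every hyperedge and gives all of them one fresh color (total $k\Delta+2$), and to reach $G^{\frac{2k+2}{2k+3}}$ it inserts two \emph{central} i-vertices per hyperedge and observes that these together with all t-vertices induce a copy of $G^{\frac{2}{3}}$, which Lemma 3 colors with $\Delta$ fresh colors and color $0$ on the t-vertices --- so the paper must carry the invariant that all t-vertices are monochromatic through the induction. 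You instead take a uniform step $m-2\to m$ in both parities, place the two new vertices at the \emph{ends} of each hyperedge next to the t-vertices, and color them by a proper $\Delta$-coloring of the half-edge graph $A$ via Brooks' theorem; your distance bookkeeping is sound and plays exactly the role of the paper's path-length analysis (within-hyperedge i-vertex distances are unchanged and stay below the threshold, t-vertex--i-vertex distances grow by $1$ but remain below the new threshold $m$, and cross-hyperedge distances through a shared t-vertex grow by exactly $2$, matching the drop of the adjacency threshold from $m-2$ to $m$), while your conflict analysis for the inserted vertices (the $\Delta-1$ co-incident half-edges at distance $2$ and the partner half-edge at distance $m-1$, everything else at distance $\geq m+1$) is accurate, and the color count $\omega(G^{\frac{m-2}{m-1}})+\Delta=\omega(G^{\frac{m}{m+1}})$ checks out in both parities against Theorem 1. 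What each approach buys: yours is parity-uniform, needs no monochromatic-t-vertex invariant, and its reduction to $\Delta$-regular graphs (the same device the paper uses inside Lemma 3, and genuinely needed for your $A$ to be $\Delta$-regular) keeps the step self-contained; the paper's odd step is cheaper (one new vertex per hyperedge and a single new color, no auxiliary coloring), and its even step recycles Lemma 3 wholesale. Two small remarks: your Brooks argument is fine as stated (a complete component of the $\Delta$-regular $A$ would have to be $K_{\Delta+1}$, which the non-adjacent pair $(y;xy)$, $(x;xz)$ rules out), but you can get the $\Delta$-coloring of $A$ even more directly by noting $A$ is the line graph of the bipartite graph $G^{\frac{1}{2}}$ of maximum degree $\Delta$, so K\"{o}nig's edge-coloring theorem applies; and your inductive hypothesis is just ``some proper $\omega$-coloring exists,'' which is cleaner than the paper's, whose even step tacitly assumes the $G^{\frac{2k}{2k+1}}$-coloring has all t-vertices colored $0$.
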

\begin{proof}
{We prove this theorem by induction on $m$. Remark 1 and Lemma 3
show that the assertion holds for $m=1,2$. We prove that if the
assertion holds for $m=2k\in \mathbb{N}$, then it is correct for $m=2k+1$
and $m=2k+2$. Suppose that
$\chi(G^{\frac{2k}{2k+1}})=\omega(G^{\frac{2k}{2k+1}})=k\Delta(G)+1$.
To prove $\chi(G^{\frac{2k+1}{2k+2}})=k\Delta(G)+2$, we follow the same lines as in the proof of Lemma 4. Assume that
$f:V(G^{\frac{2k}{2k+1}})\rightarrow C$ is a proper coloring of
$G^{\frac{2k}{2k+1}}$ where $C=\{0,1,2,\ldots,k\Delta(G)\}$. Then we
can define a proper coloring $f':V(G^{\frac{2k+1}{2k+2}})\rightarrow
C\cup\{k\Delta(G)+1\}$ as follows:\\
\[f'(x)=\left\{\begin{array}{cl}k\Delta(G)+1&x=i(k+1)j\\f(x)&x=i(l)j,\thinspace l\leq k.\end{array}\right.\]
In fact, one i-vertex with the color $k\Delta(G)+1$, is added to
each hyperedge $P_{ij}$ of  $G^{\frac{2k}{2k+1}}$ between two
central i-vertices of $P_{ij}$ to construct
$G^{\frac{2k+1}{2k+2}}$. We show that $f'$ is a proper
coloring for $G^{\frac{2k+1}{2k+2}}$.\\
Suppose that $x$ and $y$ are two adjacent vertices of
$G^{\frac{2k+1}{2k+2}}$. Therefore, $d_{G^{\frac{1}{2k+2}}}(x,y)\leq
2k+1$. It is easy to check that $f'(x)\neq f'(y)$ when $f'(x)=k\Delta(G)+1$ or $f'(y)=k\Delta(G)+1$. Hence, we assume that $f'(x)\neq k\Delta(G)+1\neq f'(y)$. If $d_{G^{\frac{1}{2k+2}}}(x,y)\leq 2k$, then before
adding new i-vertices, we must have
$d_{G^{\frac{1}{2k+1}}}(x,y)\leq 2k$. Then $x$ and $y$ are
adjacent in $G^{\frac{2k}{2k+1}}$ and their colors are different
in $f$. Now suppose that $d_{G^{\frac{1}{2k+2}}}(x,y)=2k+1$.
Therefore, there is a path of length $2k+1$ between $x$ and $y$
which contains a new i-vertex with the color $k\Delta(G)+1$.
Hence, the distance of $x$ to $y$ in $G^{\frac{1}{2k+1}}$ is at
most $2k$. Thus, $x$ and $y$ are adjacent in
$G^{\frac{2k}{2k+1}}$ and their colors are different.
Consequently, $f'$ is a proper coloring for $G^{\frac{2k+1}{2k+2}}$.\\
To prove $\chi(G^{\frac{2k+2}{2k+3}})=(k+1)\Delta(G)+1$, suppose
that $f:V(G^{\frac{2k}{2k+1}})\rightarrow C$ is a proper coloring
of $G^{\frac{2k}{2k+1}}$ such that
$C=\{0,1,2,\ldots,k\Delta(G)\}$ and all t-vertices, have the same
color 0. To construct $G^{\frac{2k+2}{2k+3}}$ from $G^{\frac{2k}{2k+1}}$, subdivide the central edge $\{i(k)j,j(k)i\}$ of any hyperedge $P_{ij}$ to three edges $\{i(k)j,i[k+1]j\}$, $\{i[k+1]j,i[k+2])j\}$ and $\{i[k+2]j,j(k)i\}$ which contain two new i-vertices $i[k+1]j$ and $i[k+2]j$. Now let $S$ be the set of
the new i-vertices and all t-vertices. Clearly we can show that
$G^{\frac{2k+2}{2k+3}}[S]$ is isomorphic to $G^{\frac{2}{3}}$.
Thus, in view of Lemma 3, we have
$\chi(G^{\frac{2k+2}{2k+3}}[S])=\Delta(G)+1$. Now let
$f':V(G^{\frac{2k+2}{2k+3}}[S])\rightarrow\{0,c_1,c_2,\ldots,c_{\Delta}\}$
be a proper coloring of $G^{\frac{2k+2}{2k+3}}[S]$ such that the
color of all t-vertices is 0 and
$\{0,c_1,c_2,\ldots,c_{\Delta}\}\cap C=\{0\}$. Now we can define
a proper coloring $f''$ for $G^{\frac{2k+2}{2k+3}}$, as
follows:\\
\[f''(x)=\left\{\begin{array}{cl}f'(x)&x\in S\\f(x)&x=i(l)j,\thinspace l\leq k.\end{array}\right.\]
Suppose that $x$ and $y$ are two adjacent vertices of
$G^{\frac{2k+2}{2k+3}}$. If $x,y\in S$, then they are adjacent in
$G^{\frac{2k+2}{2k+3}}[S]$. Hence, $f''(x)=f'(x)\neq
f'(y)=f''(y)$. If only one of them is in $S$, obviously we have
$f''(x)=f'(x)\neq f''(y)$ or $f''(y)=f'(y)\neq f''(x)$. Now
suppose that $x,y\in V(G^{\frac{2k+2}{2k+3}})\setminus S$.
Therefore, $d_{G^{\frac{1}{2k+3}}}(x,y)\leq 2k+2$. If
$d_{G^{\frac{1}{2k+3}}}(x,y)\leq 2k$, then before adding central
two i-vertices to each hyperedge, we have
$d_{G^{\frac{1}{2k+1}}}(x,y)\leq 2k$. Then $x$ and $y$ are
adjacent in $G^{\frac{2k}{2k+1}}$ and their colors are different.
Now suppose that $d_{G^{\frac{1}{2k+3}}}(x,y)\geq 2k+1$.
Therefore, there is a path of length $2k+1$ or $2k+2 $ between
$x$ and $y$, which contains two new i-vertices of $S$. Hence, the
distance of $x$ to $y$ in $G^{\frac{1}{2k+1}}$ is at most $2k$.
Thus, $x$ and $y$ are adjacent in $G^{\frac{2k}{2k+1}}$ and their
colors are different.}
\end{proof}
\begin{cor}
{Let $G$ be a connected graph with $\Delta(G)\geq3$ and $k,m\in \mathbb{N}$. Then
$\chi(G^{\frac{m}{k(m+1)}})=\omega(G^{\frac{m}{k(m+1)}})$.  }
\end{cor}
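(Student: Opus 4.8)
The plan is to prove this by induction on $k$, using Theorem 4 as the base case and Lemma 1 as the inductive engine. The key observation is that Lemma 1 increases the denominator of the fractional power by exactly $m+1$ while preserving the equality $\chi=\omega$, and that a single step of size $m+1$ is precisely what is needed to advance $k$ to $k+1$ in the expression $k(m+1)$.

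For the base case $k=1$, the claim reads $\chi(G^{\frac{m}{m+1}})=\omega(G^{\frac{m}{m+1}})$, which is exactly the content of Theorem 4 for a connected graph $G$ with $\Delta(G)\geq 3$ and $m\in\mathbb{N}$. For the inductive step, I would suppose $\chi(G^{\frac{m}{k(m+1)}})=\omega(G^{\frac{m}{k(m+1)}})$ and write $n=k(m+1)$. Applying Lemma 1 with this value of $n$ yields $\chi(G^{\frac{m}{n+m+1}})=\omega(G^{\frac{m}{n+m+1}})$, and since $n+m+1=k(m+1)+(m+1)=(k+1)(m+1)$, this is exactly the assertion for $k+1$. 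The induction therefore closes immediately.

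The only hypothesis of Lemma 1 requiring verification at each step is the condition $m<n$. Along the entire chain the denominator takes the values $n=k(m+1)$ for $k\geq 1$, and since $k(m+1)\geq m+1>m$ the inequality $m<n$ holds throughout, in particular at the base-case application where $n=m+1$. This same inequality guarantees, via Theorem 1, that the clique number $\omega(G^{\frac{m}{n}})$ is independent of $n$ along the chain, so the equality being propagated is genuinely the same numerical statement at every stage.

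There is no substantive obstacle to overcome here: all the real work already resides in Theorem 4 (supplying the base case) and in Lemma 1 (performing each denominator-increasing step while maintaining $\chi=\omega$), and the corollary is the formal consequence of iterating Lemma 1 exactly $k-1$ times starting from Theorem 4. The only point demanding a moment's care is the arithmetic bookkeeping: confirming that the additive step size $m+1$ of Lemma 1 matches the multiplicative structure of $k(m+1)$, so that precisely $k-1$ applications land on the target denominator without overshooting.
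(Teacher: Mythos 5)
Your proof is correct and matches the paper's intended argument exactly: the paper proves this corollary by citing Lemma 1 and Theorem 4, which is precisely your induction on $k$ with Theorem 4 as the base case $k=1$ and Lemma 1 advancing the denominator from $k(m+1)$ to $(k+1)(m+1)$. Your verification that $m<n$ holds at every stage, and that the clique number stays constant along the chain, fills in the routine details the paper leaves implicit.
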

\begin{proof}
{We can result this corollary by using Lemma 1 and Theorem 4. }
\end{proof}
\begin{thm}
{Let $G$ be a connected graph and $1<m\in \mathbb{N}$.\\
$(a)$ If $m$ is an even integer and $\Delta(G)\geq4$, then for any
integer $n\geq 2m+2$
\[\chi(G^{\frac{m}{n}})=\omega(G^\frac{m}{n}).\]
$(b)$ If $m$ is an odd integer and $\Delta(G)\geq5$, then for any
integer $n\geq 2m+2$
\[\chi(G^{\frac{m}{n}})=\omega(G^\frac{m}{n}).\]
}
\end{thm}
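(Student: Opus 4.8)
The plan is to reduce the whole statement to a single inductive step that raises the denominator by one. First I would invoke Theorem~1 to pin down the target value $\omega=\omega(G^{\frac{m}{n}})$, which for $m<n$ and $\Delta(G)\ge 2$ equals $\frac{m}{2}\Delta(G)+1$ (even $m$) or $\frac{m-1}{2}\Delta(G)+2$ (odd $m$); crucially this number does not depend on $n$. Since $\chi\ge\omega$ always holds, it suffices to exhibit a proper coloring with $\omega$ colors for every $n\ge 2m+2$, and I would do this by induction on $n$. The base case $n=2m+2=2(m+1)$ is exactly the instance $k=2$ of Corollary~2, which already gives $\chi(G^{\frac{m}{2m+2}})=\omega$ (there only $\Delta(G)\ge 3$ is needed).

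For the inductive step I would pass from $G^{\frac{m}{n}}$ to $G^{\frac{m}{n+1}}$ by subdividing, inside each hyperedge $P_{ij}$, exactly one edge lying in its central part, so that the newly created i-vertex $w_{ij}$ is at distance greater than $m$ from both t-vertices $i$ and $j$. Such a position exists precisely because $n\ge 2m+2$. Inserting one vertex per hyperedge turns $G^{\frac{1}{n}}$ into $G^{\frac{1}{n+1}}$, so its $m$th power is $G^{\frac{m}{n+1}}$. I would keep the old colors on all old vertices: subdivision can only increase pairwise distances, so every pair that was non-adjacent in $G^{\frac{m}{n}}$ stays non-adjacent in $G^{\frac{m}{n+1}}$, and the restricted coloring remains proper. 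It then remains only to color the new vertices $w_{ij}$.

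By the choice of its position, $w_{ij}$ lies at distance $>m$ from every t-vertex and from every vertex of any other hyperedge, so its only neighbors in $G^{\frac{m}{n+1}}$ are the $m$ i-vertices immediately preceding and the $m$ immediately following it along $P_{ij}$, giving at most $2m$ forbidden colors; moreover the various $w_{ij}$ are pairwise non-adjacent, so they may be colored independently. Hence each $w_{ij}$ can be given one of the $\omega$ colors avoiding these $\le 2m$ forbidden ones, provided $\omega\ge 2m+1$. This is exactly where the degree hypotheses enter and split into the two cases: for even $m$ with $\Delta(G)\ge 4$ one has $\frac{m}{2}\Delta(G)+1\ge 2m+1$, while for odd $m\ge 3$ with $\Delta(G)\ge 5$ one has $\frac{m-1}{2}\Delta(G)+2\ge \frac{5m-1}{2}\ge 2m+1$ (the last inequality using $m\ge 3$, which is automatic since $m$ is odd and $m>1$). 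In either case a free color exists, the induction advances, and $\chi(G^{\frac{m}{n}})=\omega(G^{\frac{m}{n}})$ for all $n\ge 2m+2$.

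The part I expect to require the most care is the bookkeeping in the inductive step: verifying that inserting one vertex in the central part of each hyperedge reproduces $G^{\frac{m}{n+1}}$ exactly, that $w_{ij}$ genuinely has no neighbor outside its own hyperedge (which forces $n\ge 2m+2$ together with the ``distance $>m$ from both endpoints'' placement), and that the number of forbidden colors is at most $2m$ and no larger. Everything else is the monotonicity of distance under subdivision plus the two elementary inequalities above. I note that Lemma~1 offers an alternative engine, advancing the denominator in steps of $m+1$ from a family of base cases; but the one-step insertion above is more economical and makes transparent why $\Delta(G)\ge 4$ (resp.\ $\ge 5$) is the precise threshold.
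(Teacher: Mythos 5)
Your proposal is correct and follows essentially the same route as the paper: induction on $n$ with base case $n=2m+2$ from Corollary~2 (at $k=2$), then a one-step subdivision of a single central edge of each hyperedge (the paper takes $\{i(m)j,i(m+1)j\}$), keeping old colors and assigning the new vertex a color avoiding the at most $2m$ colors on $i(1)j,\ldots,i(2m)j$, which exists since $\omega\geq 2m+1$ under the stated degree hypotheses. Your explicit verification of the two inequalities and of the non-adjacency of the new vertices is exactly the ``easy to check'' content the paper leaves implicit.
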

\begin{proof}
{The proofs of parts $(a)$ and $(b)$ are similar. Thus, we only prove the first part.\\
The proof is by induction on $n$. Corollary 2 shows that $(a)$ is
holds for $n=2m+2$. Suppose that
$\chi(G^{\frac{m}{n}})=\omega(G^\frac{m}{n})$ for some $n\geq
2m+2$. Let $f:V(G^{\frac{m}{n}})\rightarrow [\chi]$ be a proper
coloring of $G^{\frac{m}{n}}$ when $\chi=\chi(G^{\frac{m}{n}})$.
We extend this coloring to a proper coloring of
$G^{\frac{m}{n+1}}$. On each hyperedge $P_{ij}$, Consider $2m$
consecutive i-vertices $i(1)j,i(2)j,\ldots,i(2m)j$ and let $V_{ij}=\{i(1)j,i(2)j,\ldots,i(2m)j\}$. So $|f(V_{ij})|\leq 2m$.\\
We have
$\chi(G^{\frac{m}{n}})=\omega(G^\frac{m}{n})=\frac{m}{2}\Delta(G)+1\geq
2m+1$. Thus, for any hyperedge $P_{ij}$, there is a color which does
not assign to the vertices of $V_{ij}$, denoted by $c_{ij}$.
Subdivide the edge $\{i(m)j,i(m+1)j\}$ of each hyperedge
$P_{ij}$, to two edges and color the new i-vertex by $c_{ij}$. It
is easy to check that we have a proper coloring of
$G^{\frac{m}{n+1}}$.}
\end{proof}
We can divide Conjecture A to the following conjectures.\\
\emph{\noindent\textbf{Conjecture $A(m)$.} Let $G$ be a connected graph with $\Delta(G)\geq3$ and $m$ be a positive integer greater than 1. Then for any positive integer $n\geq m$, we have $\chi(G^{\frac{m}{n}})=\omega(G^\frac{m}{n})$.}\\
In view of Theorem 3, Conjecture $A(2)$ holds. In addition, by applying Lemma 1, we can show that $A(m)$ holds if it is correct only for any $n\in \{m+1,m+2,\ldots,2m+1\}$. The Conjectures $A$ and $A(m)$ remain open for any $m\geq3$.\\
\noindent {\bf Acknowledgment}\\
We would like to thank Professor Hossein Hajiabolhassan, M. Alishahi,  A. Taherkhani and S. Shaebani for their useful comments.


\end{document}